\newtheorem{theorem}{Theorem}
\newtheorem{corollary}[theorem]{Corollary}
\newtheorem{definition}[theorem]{Definition}
\newtheorem{proposition}[theorem]{Proposition}
\newtheorem{remark}[theorem]{Remark}
\DeclareMathOperator{\var}{Var}
\newcommand{\EE}{\mathbb{E}}
\newcommand{\XX}{\mathbb{X}}
\newcommand{\NN}{\mathbb{N}}
\newcommand{\PP}{\mathbb{P}}
\newcommand{\RR}{\mathbb{R}}
\newcommand{\cova}{{\rm Cov}}
\newenvironment{proof}[1][Proof]{\noindent\textbf{#1.} }{\ \rule{0.5em}{0.5em}}
\begin{document}

\title{An extremal property of the normal distribution, with a discrete analog%
}
\author{Erwan Hillion, Oliver Johnson, Adrien Saumard}
\maketitle

\begin{abstract}
We prove, using the Brascamp-Lieb inequality, that the Gaussian measure is the
only strong log-concave measure having a  strong log-concavity parameter equal
to its covariance matrix. We also give a similar characterization of the Poisson measure in the discrete case, using ``Chebyshev's other inequality''. We  briefly discuss how these results relate to Stein and Stein--Chen methods for Gaussian and Poisson approximation, and to the Bakry-\'{E}mery calculus.
\end{abstract}

\section{Introduction and definitions}

In this paper we consider probability densities on $(\RR^d,\mathcal{B}(\RR^d))$ which are strongly log-concave. Basic properties of log-concave and strong log-concave densities are given in the survey \cite{SauWel2014}. 

\begin{definition} \label{def:main}
Let $f : (\RR^d,\mathcal{B}(\RR^d)) \rightarrow \RR_+$ be a density function that is not supported  on any subspace of dimension $d-1$. We consider the potential function $\varphi : \RR^d \rightarrow ]-\infty,+\infty]$ defined by $\varphi = -\log(f)$. The density $f$ is said to be:
\begin{enumerate}
\item Log-concave if $\varphi$ is convex.
\item \label{it:slc} Strongly log-concave if there exists a symmetric positive definite $d \times d$ matrix $\Sigma$ such that the ratio $g := f/\gamma_{\mu,\Sigma}$  is log-concave where $\gamma_{\mu ,\Sigma }$ is the density of the Gaussian measure of mean $\mu $ and covariance matrix $\Sigma $, denoted $N_{d}\left(\mu ,\Sigma \right) $. In this case we write $f \in SLC(\Sigma,d)$, and refer to $\Sigma$ as the strong log-concavity parameter of $f$.
\end{enumerate}
\end{definition}

For brevity, a $d$-dimensional random vector $X$ is said to belong to $SLC(\Sigma,d)$ if it admits a density $f$ with respect to the Lebesgue measure on $\RR^d$ such that $f \in SLC(\Sigma,d)$. 
Observe that in Definition \ref{def:main}.\ref{it:slc}, the choice of $\mu$ is irrelevant; if the ratio $f/\gamma_{\mu,\Sigma}$ is log-concave for some $\mu$, it is log-concave for all $\mu$ (since the second derivative of $\log \gamma_{\mu,\Sigma}$ does not depend on $\mu$). For simplicity authors often choose to take $\mu$ to be zero, or to equal the 
expectation of $f$.

We state the following two results without proof:

\begin{proposition}
A strongly log-concave measure is log-concave and when its potential is twice differentiable, belonging to  $SLC\left( \Sigma ,d\right) $ is equivalent to having
\begin{equation} \label{eq:slcdef}
\varphi^{\prime \prime }\left( x\right) \succeq \Sigma ^{-1}
 \mbox{\;\;\; for any $x\in \RR^{d},$}
\end{equation}
where $\varphi^{\prime \prime }$ is the Hessian matrix of $\varphi $ and the order relation is the natural (Loewner) partial order for semi-definite symmetric matrices. Using the fact that this order is reversed on taking inverses (see
 \cite[Proposition 8.6.6]{bernstein}), we can also write this in
the form
\begin{equation} \label{eq:slcdef2}
  \left( \varphi^{\prime \prime }\left( x\right) \right)^{-1}  \preceq \Sigma
\mbox{\;\;\; for any $x\in \RR^{d},$}
\end{equation}
\end{proposition}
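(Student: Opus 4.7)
The proof is largely a direct computation once one recognises the quadratic form of the Gaussian log-density, so the plan is straightforward. I would proceed in three short steps.

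First, to see that strong log-concavity implies log-concavity, I would write $\log f = \log(f/\gamma_{\mu,\Sigma}) + \log \gamma_{\mu,\Sigma}$ and observe that both summands are concave: the first by the assumption that $f/\gamma_{\mu,\Sigma}$ is log-concave, and the second because the Gaussian density is log-concave (its log is a negative-definite quadratic form plus a constant). The sum of concave functions is concave, so $\log f$ is concave, i.e.\ $f$ is log-concave.

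Second, for the Hessian characterisation \eqref{eq:slcdef} under the assumption that $\varphi$ is twice differentiable, I would simply compute
\[
-\log \bigl(f/\gamma_{\mu,\Sigma}\bigr)(x) \;=\; \varphi(x) - \tfrac{1}{2}(x-\mu)^\top \Sigma^{-1}(x-\mu) + c,
\]
where $c$ is a constant depending only on $\Sigma$. Log-concavity of $f/\gamma_{\mu,\Sigma}$ is equivalent to convexity of this function, and under the twice-differentiability assumption this is in turn equivalent to the Hessian being positive semi-definite for every $x$. Since the Hessian of the quadratic term is exactly $\Sigma^{-1}$, this reads $\varphi''(x) - \Sigma^{-1} \succeq 0$, which is \eqref{eq:slcdef}. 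Conversely, if \eqref{eq:slcdef} holds then $\varphi(x) - \tfrac{1}{2}(x-\mu)^\top \Sigma^{-1}(x-\mu)$ has a positive semi-definite Hessian everywhere, hence is convex, and the ratio $f/\gamma_{\mu,\Sigma}$ is log-concave as required.

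Finally, the reformulation \eqref{eq:slcdef2} follows immediately from \eqref{eq:slcdef}: since $\varphi''(x) \succeq \Sigma^{-1}$ and both matrices are symmetric positive definite (the right-hand side is by assumption on $\Sigma$, and $\varphi''(x)$ therefore is as well), the cited \cite[Proposition 8.6.6]{bernstein} on the order-reversing property of matrix inversion yields $(\varphi''(x))^{-1} \preceq \Sigma$. I do not anticipate any real obstacle here; the only point worth being careful about is ensuring $\varphi''(x)$ is strictly positive definite before invoking the inversion step, which follows from $\Sigma^{-1} \succ 0$.
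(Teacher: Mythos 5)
Your proof is correct, and it is the standard argument the paper implicitly has in mind: the paper explicitly states this proposition without proof, and your three steps (concavity of a sum, computing the Hessian of $\varphi(x)-\tfrac12(x-\mu)^\top\Sigma^{-1}(x-\mu)$, and the order-reversal of inversion on positive definite matrices, noting $\varphi''(x)\succeq\Sigma^{-1}\succ 0$) fill the gap exactly as intended. No issues.
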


In the one-dimensional case, for $\alpha>0$, we write $SLC(\alpha)=SLC(\alpha,1)$ and we have:

\begin{proposition}
A differentiable density function $f$ is in $SLC(\alpha)$ if and only if the function $\frac{f^{\prime}(x)}{f(x)}+\frac{x}{\alpha}$ is non-increasing in $x$.
\end{proposition}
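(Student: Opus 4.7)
The plan is to unpack the definition of $SLC(\alpha)$ directly, observing that in dimension one, strong log-concavity reduces to the concavity of an explicit auxiliary function whose derivative is the quantity in the statement.

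First, I would choose $\mu = 0$ (which is harmless by the remark following Definition~\ref{def:main}.\ref{it:slc}) and write $g(x) = f(x)/\gamma_{0,\alpha}(x)$. Since $\gamma_{0,\alpha}(x) = (2\pi\alpha)^{-1/2}\exp(-x^2/(2\alpha))$ is strictly positive and differentiable on $\RR$, and since $f$ is differentiable, the function $\log g$ is well-defined and differentiable on the (open) set where $f > 0$. A direct computation gives
\[
\log g(x) = \log f(x) + \frac{x^{2}}{2\alpha} + \mathrm{const},
\]
so that
\[
(\log g)'(x) = \frac{f'(x)}{f(x)} + \frac{x}{\alpha}.
\]

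Next, I would invoke the standard characterization: a differentiable function on an interval is concave if and only if its derivative is non-increasing. Applied to $\log g$, this shows that $g$ is log-concave (on the support of $f$, which is an interval under the log-concavity hypothesis) if and only if $x \mapsto f'(x)/f(x) + x/\alpha$ is non-increasing. By definition, $g$ log-concave is exactly the condition $f \in SLC(\alpha)$, which finishes both implications.

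I do not expect any real obstacle here: the argument is essentially a one-line rewriting of the definition using the explicit form of the Gaussian density. The only mild point of care is the domain of differentiation (one works on the open set $\{f > 0\}$, which is an interval since any $f \in SLC(\alpha)$ is in particular log-concave, hence has interval support), but this does not affect the statement.
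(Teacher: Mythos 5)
The paper states this proposition without proof, and your argument is precisely the intended one: with $\mu=0$, $\log g(x)=\log f(x)+x^{2}/(2\alpha)+\mathrm{const}$ has derivative $f'(x)/f(x)+x/\alpha$, and concavity of a differentiable function on an interval is equivalent to its derivative being non-increasing, so the equivalence is an immediate unpacking of Definition~\ref{def:main}.\ref{it:slc}. The one small point to tighten is that in the ``if'' direction you cannot yet cite log-concavity of $f$ to say $\{f>0\}$ is an interval; instead this follows from the monotonicity hypothesis itself (on any connected component of $\{f>0\}$ the concave function $\log f(x)+x^{2}/(2\alpha)$ tends to $-\infty$ at a finite endpoint where $f$ vanishes, forcing its derivative to $-\infty$ at a right endpoint and $+\infty$ at a left endpoint, so two components would contradict global non-increasingness), after which your argument closes both implications.
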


Clearly, by definition any Gaussian $X$ belongs to $SLC(\Sigma,d)$ with strong log-concavity parameter $\Sigma$ equal to the covariance matrix $\cova(X,X)$. Indeed in general the strong log-concavity parameter $\Sigma$ is sometimes (erroneously)
called the covariance parameter. A natural question is therefore the following: if $X$ is a random vector belonging to $SLC(\Sigma,d)$, can we relate the strong log-concavity parameter $\Sigma$ to the covariance matrix $\cova(X,X)$? \\

In this note, we answer this question by proving the inequality $ \cova(X,X) \preceq \Sigma  $ (in Theorem \ref{thm:ctsbl}). Moreover, we deduce a characterization of the Gaussian; there is equality $ \Sigma= \cova(X,X) $  if and only if $X$ has Gaussian distribution with covariance $\Sigma$. In Section~\ref{sec:BL}, we prove this fact using the Brascamp--Lieb inequality. In Section~\ref{sec:FKG}, we prove a more general characterization of Gaussian distributions, but in the restricted framework of one-dimensional distributions. In Section~\ref{sec:Poisson}, we use similar methods to prove a characterization of Poisson distributions.

\section{The continuous case via the Brascamp-Lieb inequality} \label{sec:BL}

\begin{theorem} \label{thm:ctsbl}
\label{prop_extrem_Gauss}
Suppose that the random vector $X\in SLC\left(
\Sigma ,d\right) $ for some symmetric  positive definite matrix $\Sigma $.  
\begin{enumerate}  
\item[(a)]   Then 
\begin{equation} \label{eq:ctsbl} \cova(X,X) \preceq \Sigma. \end{equation}
\item[(b)]  If $\cova(X,X) = \Sigma$ then $X$ has a multivariate normal distribution with
covariance matrix $\Sigma $, that is $X\sim 
\mathcal{N}_{d}\left( \mu ,\Sigma \right) $ for some $\mu$.
\end{enumerate}
\end{theorem}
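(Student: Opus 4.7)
I would invoke the Brascamp--Lieb variance inequality, which for a density $f=e^{-\varphi}$ on $\RR^d$ with $\varphi$ strictly convex and twice differentiable states that, for any locally Lipschitz $g\in L^2(f)$,
\begin{equation*}
\var_f(g)\;\leq\;\int_{\RR^d}(\nabla g)^T\bigl(\varphi''(x)\bigr)^{-1}(\nabla g)\,f(x)\,dx.
\end{equation*}
This is tailor-made for the strong log-concavity hypothesis in the form \eqref{eq:slcdef2}, $(\varphi''(x))^{-1}\preceq\Sigma$ pointwise, which directly dominates the integrand on the right.

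\textbf{Proof of (a).} For an arbitrary vector $v\in\RR^d$, I would apply Brascamp--Lieb to the linear test function $g(x)=v^T x$. Since $\nabla g\equiv v$, this yields
\begin{equation*}
v^T\cova(X,X)\,v \;\leq\; v^T\,\EE\bigl[(\varphi''(X))^{-1}\bigr]\,v.
\end{equation*}
Integrating the pointwise bound $(\varphi''(X))^{-1}\preceq\Sigma$ gives $\EE[(\varphi''(X))^{-1}]\preceq\Sigma$, hence $v^T\cova(X,X)\,v\leq v^T\Sigma\,v$. Since $v$ is arbitrary, \eqref{eq:ctsbl} follows.

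\textbf{Proof of (b).} Assuming $\cova(X,X)=\Sigma$, both inequalities in the chain above must saturate for every direction $v$, and in particular $\EE[(\varphi''(X))^{-1}]=\Sigma$. Combined with the pointwise bound $(\varphi''(X))^{-1}\preceq\Sigma$, the random symmetric matrix $M(X):=\Sigma-(\varphi''(X))^{-1}$ satisfies $M(X)\succeq 0$ pointwise and $\EE[M(X)]=0$; testing against $v$ in a countable dense subset of $\RR^d$ then shows that $M(X)=0$ almost surely on $\Supp f$. Hence $\varphi''\equiv\Sigma^{-1}$ there, so $\varphi$ is a quadratic polynomial with Hessian $\Sigma^{-1}$, and after normalization $X\sim\mathcal N_d(\mu,\Sigma)$ for some $\mu$.

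\textbf{Anticipated obstacle.} The main care point is regularity: the abstract definition of $SLC(\Sigma,d)$ does not guarantee that $\varphi$ is $C^2$, so Brascamp--Lieb cannot be invoked verbatim. The standard fix is to mollify by convolving $f$ with a Gaussian of small covariance $\varepsilon I$; the mollified density is smooth with strong log-concavity parameter $\Sigma+\varepsilon I$, one runs the argument above, and then lets $\varepsilon\downarrow 0$. Alternatively, one can appeal to the Alexandrov a.e.\ twice-differentiability of convex functions together with an approximation version of Brascamp--Lieb. Either way, the genuine content of the proof reduces to the one-line application of Brascamp--Lieb to the family of linear functions.
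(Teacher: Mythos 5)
Your part (a) is exactly the paper's argument: Brascamp--Lieb plus the pointwise bound $(\varphi''(x))^{-1}\preceq\Sigma$, tested on linear functions, with Gaussian mollification to handle regularity. That part is fine.

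Part (b) is where you diverge from the paper, and where there is a genuine gap. The paper does not argue the equality case directly: it observes that equality forces the Poincar\'e-type constant $U(X,\Sigma)$ of Chen and Lou to equal $1$, and then invokes their Corollary~2.1, which characterizes the multivariate normal. You instead try a self-contained rigidity argument, and the step that fails is the jump from ``$\varphi''\equiv\Sigma^{-1}$ on $\Supp f$'' to ``$\varphi$ is a quadratic polynomial, hence $X$ is Gaussian.'' The class $SLC(\Sigma,d)$ contains densities whose support is a proper convex subset $K\subsetneq\RR^d$ --- most relevantly, the Gaussian $\gamma_{\mu,\Sigma}$ restricted and renormalized to $K$ (its ratio to $\gamma_{\mu,\Sigma}$ is the log-concave indicator of $K$). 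For such a truncated Gaussian one has $(\varphi''(x))^{-1}=\Sigma$ at every interior point of the support, so your random matrix $M(X)$ vanishes identically and $\EE[(\varphi''(X))^{-1}]=\Sigma$ holds, yet $X$ is not normal and $\cova(X,X)\neq\Sigma$. The reason your chain cannot see this is that you only exploit saturation of the \emph{second} inequality ($\EE[(\varphi'')^{-1}]\preceq\Sigma$); for the truncated Gaussian it is the \emph{first} inequality --- Brascamp--Lieb itself --- that is strict, and you never use its equality case. So as written your argument proves only that $f$ is a Gaussian restricted to some convex set, not that it is Gaussian.

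The gap is fixable. One route is to push your mollification remark all the way through (b): the convolved variable $X+\sqrt{\varepsilon}Z$ lies in $SLC(\Sigma+\varepsilon I,d)$, has covariance exactly $\Sigma+\varepsilon I$, and has a finite, smooth potential on all of $\RR^d$, so its support is genuinely $\RR^d$ and your rigidity argument legitimately yields $\varphi_\varepsilon''\equiv(\Sigma+\varepsilon I)^{-1}$ everywhere; then deconvolve via characteristic functions to recover $X\sim\mathcal{N}_d(\mu,\Sigma)$. Alternatively, one must separately show that a Gaussian nontrivially truncated to a convex set has covariance strictly dominated by $\Sigma$ (equivalently, analyze the equality case of Brascamp--Lieb for linear test functions), or simply cite Chen--Lou as the paper does. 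Either way, state explicitly why $\Supp f=\RR^d$; at present that is assumed where it most needs to be proved.
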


Let us recall the celebrated Brascamp-Lieb inequality \cite{Brascamp}, that can be thought of as a weighted Poincar\'{e} inequality, and which will be instrumental in our proof. If density $f$ is strictly log-concave, its potential $\varphi $ is twice continuously differentiable and $g \in L_{2}(f)$ is continuously differentiable, then for $X \sim f$
\begin{equation} \label{eq:brascamp}
\var (g(X))\leq \EE \left[ \nabla g(X)^{T}(\varphi ^{\prime \prime
}(X))^{-1}\nabla g(X)\right] \text{ }.
\end{equation}%

Theorem \ref{prop_extrem_Gauss} also builds upon the work of Chen and Lou \cite{Chen}
on characterization of the Gaussian distribution by the Poincar\'{e}
inequality. Indeed, Corollary 2.1\ in \cite{Chen} can be stated as
follows. Let $X=\left( X_{1},...,X_{d}\right) $ be a random vector such that 
$\var \left( X_{j}\right) =\sigma _{j}^{2}>0$ for any $j\in \left\{
1,...,d\right\} $. Define%
\begin{equation}
U\left( X,\Sigma \right) =\sup_{g\in \mathcal{H}_{X}}\frac{%
\var
\left( g\left( X\right) \right) }{\EE \left[ \nabla ^{T}g\left(
X\right) \Sigma \nabla g\left( X\right) \right] }\text{ ,}  \label{def_U}
\end{equation}%
where $\Sigma $ is a $d\times d$ positive semidefinite matrix with $\sigma
_{1}^{2},...,\sigma _{d}^{2}$ as its diagonal elements and $\mathcal{H}%
_{X}=\left\{ g\in C^{1}\left( \RR^{d}\right) \cap L_{2}\left(
X\right) :\EE\left[ \nabla ^{T}g\left( X\right) \Sigma \nabla g\left(
X\right) \right] >0\right\} $. Clearly taking $g (x) = x_i$ for any $i$, we can deduce that $U\left( X,\Sigma \right) \geq 1$.
However \cite[Corollary 2.1]{Chen} shows that this is sharp, by proving that
$ U\left( X,\Sigma \right) =1$ if and only if $X$ has a multivariate normal
distribution with covariance matrix $\Sigma $.

\bigskip 

\begin{proof}[Proof of Theorem \protect\ref{prop_extrem_Gauss}]
Assume first that the potential $\varphi =-\log f$ of the density $f$ of $X$ is twice  continuously
differentiable. Then combining the Brascamp-Lieb inequality \eqref{eq:brascamp} with the assumption \eqref{eq:slcdef2}, for any
continuously differentiable 
 $g\in L_{2}\left( f \right) $ we have:
\begin{eqnarray}
\var (g(X)) & \leq & \EE\left[ \nabla g(X)^{T}(\varphi ^{\prime \prime
}(X))^{-1}\nabla g(X)\right]  \nonumber \\ 
& \leq & \EE\left[ \nabla ^{T}g\left(
X\right) \Sigma \nabla g\left( X\right) \right] \text{ .} \label{eq:keyineq}
\end{eqnarray}
We can deduce that Equation \eqref{eq:ctsbl} holds; for any vector $u \in \RR^d$ we can take the linear function $g(x) = \sum_{i=1}^d u_i x_i$ in
\eqref{eq:keyineq} to deduce that $u^T \cova(X,X) u \leq u^T \Sigma u$. Since this holds for any $u$, we deduce that
$\cova(X,X) \preceq \Sigma$ in the partial order sense as claimed in part (a) of the theorem.

In general, note that
approximation by convolution with Gaussian vectors allows us to reduce to the
case where $\varphi $ is twice continuously differentiable. In
particular, it regularizes the potential of any strongly log-concave
measure, while preserving strong-log-concavity (see \cite{SauWel2014},
Proposition 5.5), meaning that (a) holds for all SLC $f$.

To deduce the case of equality stated in (b), we can restate \eqref{eq:keyineq} to say that if $X\in SLC\left(
\Sigma ,d\right) $ then the quantity defined in (\ref{def_U}) 
satisfies $U\left( X,\Sigma \right) \leq 1$. But we already have $U\left( X,\Sigma \right) \geq 1$. Hence $%
U\left( X,\Sigma \right) =1$, which implies by  \cite[Corollary 2.1]{Chen},  that $X$ is a multivariate
normal distribution with covariance matrix $\Sigma $. 
\end{proof}

\medskip

Notice that a careful reading of the proof of Theorem \ref{prop_extrem_Gauss} shows that we can weaken the assumption in the case of equality. That is, following \cite[Corollary 2.1]{Chen}, it is sufficient that $X\in SLC\left(
\Sigma ,d\right) $ for some $\Sigma$ with diagonal elements $\Sigma _{jj}= \var(X_j)  =\sigma _{j}^{2}>0$ for each  $j\in \left\{
1,...,d\right\} $ to deduce that $X\sim 
\mathcal{N}_{d}\left( \mu ,\Sigma \right) $ for some $\mu$.

\section{A one-dimensional approach using ``Chebyshev's other inequality''} \label{sec:FKG}

In this paragraph we consider probability measures on a space $\XX$ which can
be either the real line $\RR$ (with the Borel $\sigma$-algebra), the
set of natural integers $\NN$ or the discrete interval $\{0,\ldots,N\}$%
. In each case, $\XX$ is a totally ordered set, on which the following
inequality   holds:

\begin{proposition}
\label{prop:FKG} Let $u,v: \XX \rightarrow \RR$ be two functions which
are either both non-decreasing or both non-increasing. Let $X$ be a $\XX$%
-valued random variable such that $\EE[u(X)^{2}]$ and $\EE%
[v(X)^{2}]$ are both finite. We then have 
\begin{equation}
\EE[u(X)v(X)]\geq \EE[u(X)]\EE[v(X)],
\end{equation}%
which can also be written as 
\begin{equation}
\cova(u(X),v(X))\geq 0.  \label{eq:FKG}
\end{equation}

If furthermore, we suppose that $u$ is non-decreasing, $v$ is strictly
increasing and that the covariance $\cova(u(X),v(X))$ is $0$, then $u$ is a constant function on the image of $X$.
\end{proposition}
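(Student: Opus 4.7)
The plan is to use the standard symmetrization trick: introduce an independent copy $Y$ of $X$ and exploit the identity
\[
\cova(u(X), v(X)) = \tfrac{1}{2}\, \EE\bigl[(u(X)-u(Y))(v(X)-v(Y))\bigr],
\]
which follows from expanding the right-hand side and using that $X,Y$ are i.i.d. This identity does all the work for both parts. First I would verify the integrability needed to justify Fubini (the assumption that $\EE[u(X)^2], \EE[v(X)^2]$ are finite makes the product integrable via Cauchy–Schwarz on the $X,Y$ product space), then establish the identity in one line.

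For the inequality in part one, I would observe that if $u$ and $v$ are monotone in the same direction, then for any two points $a,b \in \XX$ the numbers $u(a)-u(b)$ and $v(a)-v(b)$ have the same sign (or one is zero), so $(u(a)-u(b))(v(a)-v(b)) \geq 0$. Applying this pointwise to $(X,Y)$ gives a non-negative integrand, hence $\cova(u(X),v(X)) \geq 0$.

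For the equality case (part two), the same identity shows that
\[
\EE\bigl[(u(X)-u(Y))(v(X)-v(Y))\bigr] = 0,
\]
and since the integrand is non-negative, it vanishes almost surely with respect to the product law of $(X,Y)$. The key point is that $v$ being \emph{strictly} increasing forces $v(X)-v(Y) \neq 0$ whenever $X \neq Y$; hence on the event $\{X \neq Y\}$ we must have $u(X) = u(Y)$ almost surely. I would then translate this into a statement about the image of $X$: for any two points $x, x'$ in the support of the law of $X$ with $x \neq x'$, the equality $u(x) = u(x')$ must hold (otherwise by continuity of measure one could find a positive-probability product event on which the integrand is strictly positive). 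Therefore $u$ is constant on the image of $X$, as claimed.

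The only mildly delicate point, and the one I would be most careful about, is the last step: going from ``$(u(X)-u(Y))(v(X)-v(Y)) = 0$ almost surely'' to ``$u$ is constant on the image of $X$.'' One has to be precise about what ``image of $X$'' means (support, essential range) and handle the three settings ($\RR$, $\NN$, $\{0,\ldots,N\}$) uniformly. In the discrete cases it is immediate: if $x \neq x'$ both have positive probability under the law of $X$, then the pair $(x,x')$ has positive probability under the law of $(X,Y)$, forcing $u(x)=u(x')$. In the continuous case one argues that if $u$ took two distinct values on a set of positive measure, one could separate them into two Borel sets $A, A'$ each of positive probability and derive a contradiction from the positivity of $\PP(X \in A, Y \in A')$ together with strict monotonicity of $v$.
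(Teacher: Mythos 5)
Your proposal is correct and follows essentially the same route as the paper: the symmetrization identity $2\cova(u(X),v(X))=\EE[(u(X_1)-u(X_2))(v(X_1)-v(X_2))]$ for independent copies, non-negativity of the integrand under same-direction monotonicity, and strict monotonicity of $v$ to force $u(X_1)=u(X_2)$ a.s.\ in the equality case. You are in fact somewhat more careful than the paper about the final step of converting the almost-sure statement into ``$u$ is constant on the image of $X$,'' which the paper passes over quickly.
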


Proposition~\ref{prop:FKG} is known as``Chebyshev's other inequality''  (see for example Kingman \cite[Eq. (1.7)]{kingman}), or as the
FKG inequality, due to a generalization of equation~\eqref{eq:FKG} to the
framework of finite distributive lattices, see \cite{FKG}. For the sake of
completeness, we give here a short proof:

\medskip

\begin{proof}[Proof of Proposition~\ref{prop:FKG}] We simply notice that: 
\begin{equation}
2 \cova(u(X),v(X))=\EE\left[ (u(X_{1})-u(X_{2}))(v(X_{1})-v(X_{2}))\right] ,
\end{equation}%
where $(X_{1},X_{2})$ are two independent copies of $X$. The monotonicity
assumption on $u$ and $v$ shows that $(u(X_{1})-u(X_{2}))(v(X_{1})-v(X_{2}))$
is  non- negative for all $X_1$ and $X_2$, which gives the inequality on the covariance.

If we have $
\cova (u(X),v(X))=0$ then $(u(X_{1})-u(X_{2}))(v(X_{1})-v(X_{2}))=0$ a.s. But the
assumption on $v$ implies that $u(X_{1})=u(X_{2})$ a.s. As $X_{1}$ and $%
X_{2} $ are independent, this means that $u$ is constant on the image of $X$.
\end{proof}

We use Proposition \ref{prop:FKG} to deduce the following result, which can be seen as a strengthening of 
Theorem \ref{prop_extrem_Gauss} in the one-dimensional case (see Corollary \ref{cor:1dbd}). It thus provides a link between the Brascamp--Lieb inequality
\cite{Brascamp} and Chebyshev's other inequality.

\begin{proposition} \label{prop:pseudoStein}
Let $X$ be a real-valued random variable
with mean $\mu$ and density $f$, where $f$ is in the class $%
SLC(\alpha )$ for some $\alpha >0$. Let $v\in \mathcal{C}^{1}(\RR,\RR)$ be
stricly increasing. Then 
\begin{equation}
\alpha \EE[v^{\prime }(X)]\geq \EE[ (X- \mu) v(X)].
\label{eq:pseudoStein}
\end{equation}%
Furthermore, if for one such function $v$, equality is attained in
inequality~\eqref{eq:pseudoStein}, then $X \sim \mathcal{N}(\mu ,\frac{1}{%
\alpha })$ for some $\mu \in \RR$.
\end{proposition}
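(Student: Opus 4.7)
The plan is to rewrite $\alpha \EE[v'(X)] - \EE[(X-\mu)v(X)]$ as a covariance and then apply Chebyshev's other inequality (Proposition~\ref{prop:FKG}). Writing $\varphi = -\log f$, I would introduce the auxiliary function
\begin{equation*}
h(x) := \alpha \varphi'(x) - (x - \mu).
\end{equation*}
The one-dimensional $SLC(\alpha)$ hypothesis gives $\varphi''(x) \geq 1/\alpha$, so $h'(x) = \alpha\varphi''(x) - 1 \geq 0$; hence $h$ is non-decreasing. Using $\mu = \EE[X]$ together with the score identity $\EE[\varphi'(X)] = -\int f'(x)\,dx = 0$, I also obtain $\EE[h(X)] = 0$.

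Next I would carry out an integration by parts, $\EE[v(X)\varphi'(X)] = -\int v(x) f'(x)\,dx = \EE[v'(X)]$, valid provided $vf$ vanishes at $\pm\infty$. Combining,
\begin{equation*}
\alpha \EE[v'(X)] - \EE[(X-\mu)v(X)] \;=\; \EE[v(X) h(X)] \;=\; \cova(v(X), h(X)),
\end{equation*}
where the last step uses $\EE[h(X)] = 0$. Both $v$ and $h$ are non-decreasing, so Proposition~\ref{prop:FKG} gives $\cova(v(X), h(X)) \geq 0$, which is exactly \eqref{eq:pseudoStein}.

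For the equality case, I would invoke the rigidity clause of Proposition~\ref{prop:FKG}: since $v$ is strictly increasing and $h$ is non-decreasing, $\cova(v(X), h(X)) = 0$ forces $h$ to be constant on the image of $X$; together with $\EE[h(X)] = 0$ this yields $h \equiv 0$ on $\Supp(X)$. Hence $\varphi'(x) = (x-\mu)/\alpha$ on the support, so $\varphi(x) = (x-\mu)^2/(2\alpha) + \text{const}$ there, meaning $f$ coincides on its support with a Gaussian density of variance $\alpha$.

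The main obstacle will be making the integration by parts fully watertight. It is harmless when $f$ has full support $\RR$ and $vf$ decays at infinity. The delicate case is when $f$ has compact support $[a,b]$: there a boundary term $\alpha\bigl(v(b)f(b) - v(a)f(a)\bigr)$ appears on the right side of the identity above and, for a symmetrically truncated Gaussian, is strictly positive (since $v$ is strictly increasing), making the inequality strict. Hence equality cannot occur with compact support, so $X$ must be a genuine Gaussian on $\RR$.
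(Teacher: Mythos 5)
Your proof is correct and is essentially the paper's own argument: your auxiliary function $h(x) = \alpha\varphi'(x)-(x-\mu)$ is exactly $-\alpha$ times the paper's $u(x)=\frac{f'(x)}{f(x)}+\frac{x-\mu}{\alpha}$, and both proofs reduce the claim to Chebyshev's other inequality (Proposition~\ref{prop:FKG}) applied to $v$ and this monotone score-type function of mean zero, with the rigidity clause yielding the ODE $\varphi'(x)=(x-\mu)/\alpha$ in the equality case. Your closing discussion of the integration-by-parts boundary terms is, if anything, slightly more careful than the paper, which passes over that point silently.
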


\begin{proof} We set $u(x) := \frac{f^{\prime}(x)}{f(x)}+\frac{x-\mu}{\alpha}$, which has mean $\EE u(X) = 0$.
By the $SLC(\alpha)$ assumption, we know that $u : \RR \rightarrow 
\RR$ is non-increasing. By Proposition~\ref{prop:FKG}, we have $%
\EE[u(X)v(X)] = \cova(u(X),v(X)) \leq 0$.  But we have : 
\begin{eqnarray*}
\EE[u(X)v(X)] &= & \int_\RR \left(\frac{f^{\prime}(x)}{f(x)} + \frac{x - \mu
}{\alpha} \right) v(x) f(x) dx \\
&=& \int_\RR f^{\prime}(x) v(x) dx + \frac{1}{\alpha} \int_\RR (x - \mu) v(x) f(x) dx
\\
&=& \int_\RR f(x) \left( \frac{x - \mu}{\alpha} v(x)-v^{\prime}(x) \right) dx \\
&=& \frac{1}{\alpha }\EE[(X-\mu) v(X) - \alpha v^{\prime}(X)],
\end{eqnarray*}
from which we deduce the inequality we wanted.

If equality is attained for some strictly increasing function $v$, we deduce from the equality
case in Proposition~\ref{prop:FKG} that $u(X)$ is a constant random variable,
thus that $u$ is a constant function on the support of $X$.  However, the SLC assumption on $X$ means that the potential $\varphi$ is convex, which tells us that the support of $X$  (the values for which $\phi$ is finite) is an interval.

On this interval, we can consider the solutions $f$
of the ODE $\frac{f^{\prime}(x)}{f(x)}+\frac{x-\mu}{\alpha} = A$, which satisfy $%
f(x) = B e^{Ax-\frac{(x-\mu)^2}{2 \alpha}}$, for some constants $A,B \in 
\RR$. The constraint that $f$ is a probability density with mean $\mu$ implies that $A=0$ and $B=%
\sqrt{\frac{1}{2 \pi \alpha }}$, which means that $X \sim \mathcal{N}%
(\mu,\alpha)$. \end{proof}

\medskip

An immediate corollary of Proposition \ref{prop:pseudoStein}, which is the one-dimensional case of  Theorem \ref{prop_extrem_Gauss},  is obtained by considering the case where $v(x)=x-\mu$:

\begin{corollary} \label{cor:1dbd}
Let $X$ be a real-valued random variable with density $f$, where $f$ is in
the class $SLC(\alpha)$ for some $\alpha > 0$. Then $\var(X) \leq
\alpha$, with equality if and only if $X$ is Gaussian.
\end{corollary}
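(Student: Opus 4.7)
The plan is to specialize Proposition~\ref{prop:pseudoStein} to a single explicit test function and then read off both the bound and the equality case. Concretely, I take $v(x) = x - \mu$ with $\mu = \EE[X]$, which is clearly $\mathcal{C}^{1}(\RR,\RR)$ and strictly increasing, so it is admissible in Proposition~\ref{prop:pseudoStein}.

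For the bound, I just substitute: $v'(x) = 1$, so $\EE[v'(X)] = 1$, while $(X-\mu)v(X) = (X-\mu)^{2}$, so $\EE[(X-\mu)v(X)] = \var(X)$. Inequality~\eqref{eq:pseudoStein} then collapses to $\alpha \geq \var(X)$, as claimed. For the equality statement, the ``if'' direction is trivial: if $X \sim \mathcal{N}(\mu,\alpha)$ then $f = \gamma_{\mu,\alpha}$, so the ratio $f/\gamma_{\mu,\alpha} \equiv 1$ is log-concave, giving $X \in SLC(\alpha)$, and $\var(X) = \alpha$ by construction. For the ``only if'' direction, if $\var(X) = \alpha$ then equality holds in~\eqref{eq:pseudoStein} for the strictly increasing function $v(x) = x - \mu$, and the equality clause of Proposition~\ref{prop:pseudoStein} forces $X$ to be Gaussian (with the matching variance).

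I anticipate no real obstacle here: the word ``immediate'' in the statement is accurate, since all the analytic work---the invocation of Chebyshev's other inequality to obtain the bound, and the ODE argument to pin down the Gaussian in the equality case---is already packaged in Proposition~\ref{prop:pseudoStein}. The only point that warrants a brief check is that our chosen $v(x) = x - \mu$ genuinely lies in the admissible class (strictly increasing and $\mathcal{C}^{1}$) both for the inequality and for the equality assertion, which it plainly does.
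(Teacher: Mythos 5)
Your proof is correct and follows exactly the paper's route: the paper also obtains the corollary by taking $v(x)=x-\mu$ in Proposition~\ref{prop:pseudoStein}, with the bound and the equality case read off just as you describe. Your explicit check of the ``if'' direction is a small, welcome addition the paper leaves implicit.
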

Indeed, writing $M_{r}(X) = \EE (X-\mu)^r$ for the centred moments of order $r$ and taking $v(x) = (x-\mu)^{2r-1}$, 
for any $X \in SLC(\alpha)$ we can deduce that  $M_{2r}(X) \leq \alpha (2r-1) M_{2r-2}(X)$, and hence by induction $M_{2r}(X) \leq (2r)!/r! (\alpha/2)^r$, so the values obtained by the Gaussian are extremal, as we might expect.

\begin{remark} \label{rem:stein}
Equation \eqref{eq:pseudoStein} can be viewed as a one-sided version of the Stein equation, used to establish a characterization of the Gaussian distribution when proving the Central Limit Theorem in Stein's Method \cite{Stein}. That is if, with $\alpha = \var(X)$, the equation \eqref{eq:pseudoStein} holds with equality for all $v$ then it is well-known that $X$ must be Gaussian. Here, Proposition \ref{prop:pseudoStein} allows us to reach the same conclusion if equality holds for a single $v$, under the additional SLC assumption.
\end{remark}

\section{A characterization of Poisson distributions.} \label{sec:Poisson}

The same strategy can be adapted to the discrete case, to study random variables supported on the natural numbers $\NN$, with suitable
definitions of derivative and of strong log-concavity:

\begin{definition} \mbox{ }
\begin{enumerate}
\item
The left-derivative $\nabla u$ of a function $u : \NN \rightarrow 
\RR$ is defined by $\nabla u(0) :=u(0)$, and by $\nabla u(n) :=
u(n)-u(n-1)$ for $n \geq 1$.
\item
The right-derivative $\nabla^* v$ of a function $v : \NN \rightarrow 
\RR$ is defined by $\nabla^* v(n) :=- v(n+1)-v(n)$ for $n \geq 0$.
\end{enumerate}
\end{definition}

The operators $\nabla $ and $\nabla ^{\ast }$ are dual up to a sign, in the
sense that a simple application of summation by parts gives
\begin{equation}
\sum_{n=0}^{\infty }\left( \nabla u(n)\right) v(n)= -\sum_{n=0}^{\infty
}u(n)\nabla ^{\ast }v(n),
\end{equation}%
for every function $u,v\in \mathcal{L}^{2}(\NN)$.

\begin{definition} Consider probability mass function $f : \NN \rightarrow \RR_+^*$ such that $\sum_{k =
0}^\infty f(k) = 1$ and $\alpha > 0$. We say that $f \in SLC(\alpha)$ if 
sequence $\left(\frac{\nabla f(k)}{f(k)} + \frac{k}{\alpha} \right)_{k \geq
0}$ is non-increasing in $k$.
\end{definition}

Direct calculations show that $f \in SLC(\alpha)$ if and only if $f(1) \leq
\alpha f(0)$ and 
\begin{equation} \label{eq:slcdisc}
\forall n \geq 0 \ , \  \frac{f(n+1)}{f(n+2)} - \frac{ f(n)}{f(n+1)} = \frac{f(n+1)^2-f(n) f(n+2)}{f(n+1) f(n+2)} \geq \frac{%
1}{\alpha}.
\end{equation}
We note that this condition was introduced as a special case of Assumption A in \cite{caputo}, and was studied further in
\cite{johnson38}.  In the case of  Poisson random variables with mean $\mu$ observe that the LHS of \eqref{eq:slcdisc} is constant and equal to $1/\mu$, so Poisson random variables are $SLC(\alpha)$ where strong log-concavity parameter $\alpha = \mu$. Again, we shall see that this property characterizes the Poisson family, using the following result.

\begin{proposition}
Let $X$ be a $\NN$-valued random variable with mean $\mu$ such that for every $n \geq 0$%
, $\PP(X=n)=f(n)$, where $f \in SLC(\alpha)$. For every strictly increasing $v : \NN
\rightarrow \RR$ the
\begin{equation}  \label{eq:pseudoSteinN}
\alpha \EE[ \nabla^* v(X)] \geq \EE[ (X-\mu) v(X)].
\end{equation}
Furthermore, if equality is attained in equation~\eqref{eq:pseudoSteinN} for some $v$,
then $X$ is Poisson with mean $\alpha$ (we write $X \sim \mathcal{P}(\alpha)$).
\end{proposition}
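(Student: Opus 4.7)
The plan is to mimic the proof of Proposition~\ref{prop:pseudoStein} line by line, replacing integration by parts by the summation-by-parts formula for $\nabla$ and $\nabla^*$ recalled above. First I introduce
\[
u(k) := \frac{\nabla f(k)}{f(k)} + \frac{k-\mu}{\alpha},
\]
which is non-increasing on $\NN$ by the $SLC(\alpha)$ hypothesis (since adding the constant $-\mu/\alpha$ does not affect monotonicity). I would then check that $\EE[u(X)] = 0$: the $(X-\mu)/\alpha$ term has mean zero by definition of $\mu$, and the first term telescopes:
\[
\sum_{k \ge 0} f(k)\,\frac{\nabla f(k)}{f(k)} = \sum_{k \ge 0} \nabla f(k) = \lim_{N \to \infty} f(N) = 0,
\]
the last equality being forced by summability of $f$.

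Next I apply Proposition~\ref{prop:FKG} to the non-increasing $u$ and the strictly increasing $v$, which gives $\cov(u(X), v(X)) = \EE[u(X)v(X)] \le 0$. Expanding,
\[
\EE[u(X)v(X)] = \sum_{k \ge 0} \nabla f(k)\, v(k) + \frac{1}{\alpha}\EE[(X-\mu) v(X)],
\]
and the duality identity between $\nabla$ and $\nabla^*$ stated after the definition converts the first sum into $-\sum_{k\ge 0} f(k)\,\nabla^* v(k) = -\EE[\nabla^* v(X)]$. Substituting this back and multiplying by $\alpha$ yields \eqref{eq:pseudoSteinN}.

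For the equality case, the equality clause of Proposition~\ref{prop:FKG} (using that $v$ is strictly increasing) forces $u$ to be constant on the image of $X$, which is all of $\NN$ because $f$ is strictly positive everywhere. Evaluating $u(0) = 1 - \mu/\alpha$ identifies the constant, and comparing with $u(k)$ for $k \ge 1$ collapses to the recursion $f(k) = (\alpha/k)\,f(k-1)$, whose normalized solution is $f(k) = e^{-\alpha}\alpha^k/k!$; in particular $\mu = \alpha$ is automatic, so $X \sim \mathcal{P}(\alpha)$. The main obstacle is essentially cosmetic: verifying $\EE[u(X)] = 0$ requires handling the boundary term $\nabla f(0) = f(0)$ correctly, and the equality analysis depends on $f$ being strictly positive on $\NN$ (so that the image of $X$ is the whole domain, not a proper sub-interval). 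Beyond that, the argument is a direct discrete transcription of the continuous proof.
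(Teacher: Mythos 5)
Your proposal is correct and follows essentially the same route as the paper: apply Chebyshev's other inequality to the non-increasing score-type function $u(k)=\frac{\nabla f(k)}{f(k)}+\frac{k-\mu}{\alpha}$ and the increasing $v$, convert $\sum_k \nabla f(k)\,v(k)$ via summation by parts into $-\EE[\nabla^* v(X)]$, and in the equality case use constancy of $u$ (with $u(0)$ pinning down the constant) to recover the Poisson recursion. Your explicit verification that $\EE[u(X)]=0$ by telescoping, and the remark that strict positivity of $f$ makes the image of $X$ all of $\NN$, are details the paper leaves implicit but are entirely consistent with its argument.
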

\begin{proof} We again apply Proposition~\ref{prop:FKG} with the
functions $u(k)=\frac{\nabla f(k)}{f(k)}+\frac{k - \mu}{\alpha }$ and $v(k)$,
yielding $\EE[u(X)v(X)]\leq 0$. But : 
\begin{eqnarray*}
\EE[u(X)v(X)] &=&\sum_{k=0}^{\infty }\left( \frac{\nabla f(k)}{f(k)}+%
\frac{k-\mu}{\alpha }\right) v(k)f(k) \\
&=&\sum_{k=0}^{\infty }\left( \nabla f(k)\right) v(k)+\frac{1}{\alpha }%
\sum_{k=0}^{\infty } (k - \mu) v(k)f(k) \\
&=&\frac{1}{\alpha }\sum_{k=0}^{\infty }f(k)\left(( k- \mu) v(k)-\alpha \nabla
^{\ast }v(k)\right) \\
&=&\frac{1}{\alpha }\EE[ (X-\mu) v(X)-\alpha (\nabla ^{\ast }v(X)].
\end{eqnarray*}%
If equality is attained for some strictly increasing $v$, we deduce that $u$
is a constant function, i.e. that there is some $\lambda \in \RR$
such that : 
\begin{equation}
\forall n\geq 0\ ,\ \frac{\nabla f(n)}{f(n)}+\frac{n}{\alpha }=\lambda .
\label{eq:EDOPoisson}
\end{equation}%
But equation~\eqref{eq:EDOPoisson} with $n=0$ implies that $\lambda =1$, and
thus for $n\geq 1$, equation~\eqref{eq:EDOPoisson} takes the simpler form $%
\frac{f(n-1)}{f(n)}=\frac{n}{\alpha }$, from which we deduce that $f(n)=%
\frac{f(0)\alpha ^{n}}{n!}$ for every $n\geq 0$. The condition $%
\sum_{n=0}^{\infty }f(n)=1$ gives $f(0)=e^{-\alpha }$, and we recognize the
Poisson distribution $X \sim \mathcal{P}(\alpha )$.
\end{proof}
\bigskip

Again, by taking $v(x) = x - \mu$ in  \eqref{eq:pseudoSteinN}, we can deduce that the $SLC(\alpha)$ condition can only hold if
$\alpha \geq \var(X)$, which we can view as a discrete counterpart of Theorem \ref{thm:ctsbl}. Note that \cite[Lemma 5.3]{johnson38} showed that the same condition implies that $\alpha \geq \mu$.

A counterpart of Remark \ref{rem:stein} holds on $\NN$, referring to the Stein--Chen method in Poisson approximation \cite{chen75}.
That is the Stein--Chen method is based on the fact that if  \eqref{eq:pseudoSteinN} holds with equality for $\alpha = \mu$ for 
every function $v$, then we can deduce that $X$ must be Poisson. Again, we are able to reach the same conclusion under the SLC
condition if equality is attained for a single function $v$.

\begin{remark} One further link between discrete and continuous settings is the following. It is well-known that strong log-concave densities satisfy the so-called Bakry-\'{E}mery condition, which is a natural setting under which functional inequalities (including Poincar\'{e} and log-Sobolev) can be proved, with lower bounds on $\varphi''$ of the form \eqref{eq:slcdef} guaranteeing bounds on the log-Sobolev constant -- see for example \cite{bakry2} for a review of this material.  It is striking that \cite{johnson38} proved similar results on $\NN$ with a similar role being played by the value of $\alpha$ arising in the discrete SLC condition \ref{eq:slcdisc}. The results of the current paper give further evidence of a natural link between these formulations. \end{remark}

We briefly remark that similar arguments can be used to characterize the binomial distribution among random variables with probability mass functions $f$ supported on discrete interval $\{0,\ldots,N\}$. That is, if we define derivative $\nabla_N h(0) = h(0)$ and
\begin{equation} \label{eq:nablan}
 \nabla_N h(n) := \frac{N-n}{N} h(n) - \frac{N-n+1}{N} h(n-1) \mbox{ \;\;\; for $1 \leq n \leq N$},\end{equation}
 and its conjugate to satisfy $\nabla_N^* h(N) = 0$ and
\begin{equation} \label{eq:nablanst}
 \nabla_N^* h(n) := \frac{N-n}{N} (h(n+1) - h(n)) = \frac{N-n}{N} \nabla^* h(n)  \mbox{ \;\;\; for $0 \leq n \leq N-1$},\end{equation}
we can define the set of $SLC_N(\alpha)$ random variables to be those for which $u(n) := \frac{\nabla_N f(n)}{f(n)} + \frac{n-\mu}{\alpha}$ is non-increasing in $n$. Observe that taking $f$ to be  Binomial$(N,p)$ random variables,  this property holds  with equality if $\alpha = N p = \mu$.

Again, using the same argument based on Chebyshev we can deduce that for  random variables $X \in SLC_N(\alpha)$ and  strictly increasing functions $v$, the expectation
\begin{equation} \label{eq:charbin}
\alpha \EE \left[  \nabla_N^* v(X) \right] \geq \EE \left[ (X- \mu) v(X) \right]. \end{equation}
Again taking $v(x) =  x- \mu$ we deduce that $\var(X) \leq \alpha(1 - \mu/N)$.  Note that  $\var(X) = N p(1-p) = \alpha(1-\mu/N)$  if $X$ is Binomial$(N,p)$.  Indeed as before, if equality in \eqref{eq:charbin} holds for some strictly increasing $v$, a similar argument based on  $u(n)$ being constant allows us to deduce that $f(n) = \binom{N}{n} q^n (1-q)^{N-n}$, where $q = \alpha/N$, so $f$ is Binomial with mean $\alpha$.

\end{document}